\theoremstyle:=definition,remark,plain\do{%
        \expandafter\g@addto@macro\csname th@\theoremstyle\endcsname{%
            \addtolength\thm@preskip\parskip
            }%
        }
\DeclareMathAlphabet{\mathpzc}{OT1}{pzc}{m}{it}
\newcommand{\desc}{\alpha} 
\newcommand{\F}{\mathcal{F}}
\newcommand{\R}{\mathds{R}}
\newcommand{\Z}{\mathds{Z}}
\newcommand{\Ga}{G_{\desc}}
\newcommand{\hGa}{\widehat{G}_{\desc}}
\newcommand{\Ra}{R_{\desc}}
\newcommand{\Va}{V_{\desc}}
\newcommand{\hit}[1]{\mathpzc{h}_{#1}}
\newcommand{\ind}[1]{\mathds{1}_{#1}}
\newcommand{\g}{{g}}
\newcommand{\gi}{\tilde{g}}
\newcommand{\D}{\mathcal{D}}
\newcommand{\Da}{\mathcal{D}_{\desc}}
\newcommand{\I}{\mathds{R}}
\newcommand{\ea}[1]{e^{-\desc{#1}}}
\newcommand{\estados}{\R}
\newcommand{\Aa}{A_{\desc}}
\newcommand{\Ex}[2]{\E_{#1}\left(#2\right)}
\newcommand{\be}{\begin{equation}}
\newcommand{\ee}{\end{equation}}
\newcommand{\bd}{\begin{equation*}}
\newcommand{\ed}{\end{equation*}}
\def\P{\operatorname{\mathds{P}}}
\def\E{\operatorname{\mathds{E}}}
\def\erf{\operatorname{erf}}
\newtheorem{teo}{Theorem}[section]
\newtheorem{cor}[teo]{Corolary}
\newtheorem{lem}[teo]{Lemma}
\theoremstyle{definition}
\newtheorem{remarks}[teo]{Remarks}
\newtheorem{defn}[teo]{Definition}
\newtheorem{example}[teo]{Example}
\title{Optimal stopping for Markov processes \\ with positive jumps}
\author{Fabi\'an Crocce\footnote{Facultad de Ciencias, Universidad de la Rep\'ublica, e-mail: fcrocce@gmail.com} \ and \ Ernesto Mordecki\footnote{Facultad de Ciencias, Universidad de la Rep\'ublica, e-mail: mordecki@cmat.edu.uy}}
\date{}
\begin{document}
\maketitle

\begin{abstract}
Consider the discounted optimal stopping problem for a real valued Markov process with only positive jumps. 
We provide a theorem to verify that the optimal stopping region has the form $\{x\geq x^*\}$ for some critical threshold $x^*$, 
and a representation formula for the value function of the problem in terms of the Green kernel of the process, based on Dynkin's characterization of the value function as the least excessive majorant.  
As an application of our results, using the Fourier transform to compute the Green kernel of the process, 
we solve a new example:
the optimal stopping for a L\'evy-driven Ornstein-Uhlenbeck process used to model prices in electricity markets.
\end{abstract}
\section{Introduction}

Optimal stopping problems (OSP) play a central role in optimization and control of stochastic process.
One possible formulation of the problem is the following: 
a player has to choose the moment to collect a reward that evolves randomly in time driven by a stochastic process $X$, 
using only the incoming information generated by the process 
(the player knows the law of the process).
In mathematical terms, 
the discounted optimal stopping problem to be considered in this paper consists in:
given 
a stochastic process $X=\{X_t\colon t\geq 0\}$ that starts at $X_0=x$, 
a reward function $g\colon\R\to[0,\infty)$, 
and a discount $\alpha>0$,
find the value function 
$V_\alpha(x)$ and the optimal stopping rule $\tau^*$ that satisfy
\begin{equation}\label{eq:osp}
V_\alpha(x) = \sup_{\tau}\E_x{e^{-\alpha\tau} g(X_{\tau})}=\E_x{e^{-\alpha\tau^*} g(X_{\tau^*})},
\end{equation}
where $\tau$ ranges in the class of stopping times. 
It is clear that many alternative formulations of the problem have interest, 
the process can have discrete time,
the discount can be $\alpha=0$, $X$ can take values in larger/smaller spaces (like $\Z$ or $\R^d$), 
the payoff function can involve non-markovian expectations (as the $\sup$ or the integral of the process),
and many others.
In other words, an OSP is an optimization problem where
the variable lies in the set of stopping times, and the objective function is the expectation of a stopped reward.

The optimal stopping problem of continuous time stochastic processes 
has a rich theory and a large amount of interesting applications. 
A central reference in the field is the monograph by Albert N. \cite{shirBook1978}, that supported the research in the field
during many years, including several editions and translations since the initial russian edition (\cite{Sh1}). 
Many of the posterior contributions in the field are included in the joint monograph of  \cite{ps}.

Within the initial works of optimal stopping of continuous time stochastic processes, 
we mention the quickest detection problems by 
\cite{sh1961}, 
 the perpetual option pricing problem by \cite{mckean},
 and the classical problems solved by \cite{taylor}.
In more recent times, we mention, notably, the Russian Options by \cite{shepp93}. 
A more comprehensive discussion, including the markovian-martingale dichotomy to solve OSPs,
 and a large list of references can be found in \cite{ps}.




Once this description of the advances in the field, in the present paper
we ask whether it is possible to solve explicitly optimal stopping problems for processes with jumps.
Optimal stopping problems for processes with jumps 
have a shorter history, summarized up to certain point in the monograph by \cite{kyprianou2006introductory}.
This review is concerned with optimal stopping problems for L\'evy processes, including the solutions provided
by \cite{mordeckiOSPOLP} and \cite{novikov2007solution}.
Problems for spectraly one sided L\'evy processes
where considered by several authors, for instance
\cite{avram2004exit} and \cite{chan} in the framework of option pricing.

Going a step further, in the present paper we propose to solve explicitely one OSP for a Markov process with jumps (one sided)
more general than a L\'evy process. 
In this regard, although there are some papers where verification theorem are provided,
for instance \cite{mordecki-salminen} or \cite{ChristensenSalminenBao}, to the knowledge of the authors,
no explicit problems in this framework were solved.

The approach we propose is based on a combination of the Riez's representation of an excessive function and the inversion formula for the infinitesimal generator of the process. Under the corresponding technical hypothesis to be detailed further, the proposal provides an equation to find the value function $V_\alpha$, that comes as a closed formula to compute this function in two main situations: 
when the process is a diffusion (see \cite{cm,cm2}) and when we have one sided problems with one sided jumps of the Markov process,
the problem considered in this paper.
More precisely, we are able to express the value function in terms of an integral that depends on the Green kernel of the process for an optimal stopping problem whose optimal stopping region has the form $\{x\geq x^*\}$ with $x^*$ a critical threshold, for Markov processes that have only positive jumps. It should be noticed that, despite the simplification of the assumption that the process has only one-sided jumps, the case considered is the ``difficult'' case, as the stopped process has a positive overshot.


The departing point of our proposal, inscribed in the Markovian approach, 
is Dynkin's characterization of the optimal stopping problem solution. 
Dynkin's characterization \cite{dynkin:1963} states (in our framework) that, if the reward function is lower semi-continuous,  $V$ is the value function of the non-discounted optimal stopping problem with reward $g$ if and only if $V$ is the least excessive function such that $V(x)\geq g(x)$ for all $x\in \R$.  
Applying this result for $Y$, the killed process of $X$ at rate $\alpha$, 
we obtain that $V_{\alpha}$, the value function of the problem with discount $\alpha$, is characterized as the least $\alpha$-excessive majorant of $g$.

As we mentioned, the second step uses Riesz's decomposition of an $\alpha$-excessive function. We recall this decomposition also in our context
(see \cite{kw,kw1,dynkin:1969}). 
A function $u\colon \I\to \mathbb{R}$ is $\alpha$-excessive if and only if there exist a non-negative Radon measure $\mu$ and an $\alpha$-harmonic function such that
\begin{equation}
\label{eq:alphaexcessive}
 u(x)=\int_{-\infty}^{\infty}G_{\alpha}(x,y)\mu(dy) + \text{($\alpha$-harmonic function)}.
\end{equation}
Furthermore, the previous representation is unique. The measure $\mu$ is called the representing measure of $u$.


The third step is based on the fact that the resolvent and the infinitesimal generator of  a Markov process are inverse operators. 
Suppose that we can write 
\begin{equation}\label{eq:inversionV}
V_{\alpha}(x)=\int_{-\infty}^{\infty}(\alpha-L) V_{\alpha}(y)G_{\alpha}(x,dy),
\end{equation}
where $L$ is the infinitesimal generator.
Assuming that the stopping region has the form
 $\{x\geq x^*\}$, and taking into account that $V_{\alpha}$ is $\alpha$-harmonic in the continuation region and $V_{\alpha}=g$ in the stopping region we obtain as a suitable candidate to be the representing measure
\begin{equation}\label{eq:repMeasure}
\mu(dy)=
\begin{cases}
0,                            				&\text{ if $y<x^*$},\\
(\alpha-{L})g(y)m(dy),  			&\text{ if $y>x^*$},
\end{cases}
\end{equation}
where $m(dy)$ is some convenient measure.
Based on these considerations, we present a key verification result, that provides a way to solve new examples. 


The proposal detailed above can be inscribed in the \emph{representation approach}, 
initiated by \cite{salminen85}, 
that consists in the representation of the value function $\Va$ in \eqref{eq:osp}, 
--the least $\alpha$-excessive majorant of the payoff $g$--
as an integral of a kernel, with respect to a radon measure, the \emph{representing measure}.
While the kernel carries information about the process,  
the measure happens to carry information about the payoff function $g$ and the stopping region. 
\cite{salminen85} uses the Martin kernel of a diffusion, in consequence the representing measure 
is the distribution of an $h$-transform of the stopped process (see \cite{dynkin:1969}).
The article of \cite{mordecki-salminen} provides also a representation theorem for optimal stopping of Hunt processes,
but in this case in terms of the Green kernel. There, the support of the measure is proved to be the stopping region for the problem. In \cite{cm,cm2} the density of the measure was computed in terms of $g$ for diffusions.

%
The rest of the paper is as follows. In Section \ref{section:2} we provide the necessary preliminaries and present the main results.
In Section \ref{section:proof} we present the proof of the main results.
In Section \ref{section:application} we present an application where the underlying process is a L\'evy-driven Ornstein-Uhlenbeck process, used in electricity models \cite[see][]{electricity}. 

\section{Preliminaries and main result}\label{section:2}
  

Given a probability space $(\Omega,\F,\P)$, 
consider a real valued time-homogeneous, non-terminating strong Markov process 
$\{X_t\colon t\geq 0\}$ 
adapted to a filtration 
$\{\F_t\colon t\geq 0\}$, ($\F_t\subset \F$ for all $t\geq 0$) 
and probabilities $\{\P_x\colon x\in\R\}$.


The filtration $\{\F_t\}$ is assumed to be \emph{right-continuous}, i.e. for all $t\geq 0$, we have that 
$\F_t=\bigcap_{s>0}\F_{t+s}=:\F_{t^+}$.
It is also assumed that $\F_t$ is $\P_x$-complete for all $t$ and for all $x$.


Given the filtration $\{\F_t\}$, a random variable $\tau$ in $(\Omega,\F)$, taking values in $[0,\infty]$ and such that $\{\omega:\tau(\omega)<t\}\in \F_t$, for all $t\geq 0$ is known as a \emph{stopping time} with respect to the filtration $\{\F_t\}$.
Given a stopping time $\tau$ with respect to a filtration $\{\F_t\}$ of $\F$, the family of sets $\F_\tau$ defined by
\begin{equation*}
 \F_\tau:=\{A \in \F\colon \forall t\geq 0, A\cap\{\omega:\tau(\omega)\leq t\} \in \F_t \}
\end{equation*}
is a sub $\sigma$-algebra of $\F$.

A progressively measurable Markov process $X=(\{X_t\},\{\F_t\},\{\P_x\})$ is said to be \emph{left-quasi-continuous} if for any stopping time $\tau$ with respect to $\{\F_t\}$, the random variable $X_{\tau}$ is $\F_{\tau}$-measurable and for any non-decreasing sequence of stopping times $\tau_n \to \tau$
\begin{equation*}
 X_{\tau_n}\to X_{\tau} \quad (\P_x-a.s\ \text{in the set}\ \{\tau<\infty\}),
\end{equation*}
for all $x\in\estados$.

\begin{defn} \label{def:standardMarkov} 
With the assumptions above, a real valued c\`adl\`ag (continous from the right with limits from the left), 
left quasi-continuous strong Markov process $X$ 
is a \emph{standard Markov process}.
\end{defn}
For general reference about Markov processes we refer to \cite{dynkin:books,dynkin1969theorems,karlin-tay,rogers2000diffusions,blumenthal-ge,revuz}. 
The formulation above is taken from 
the PhD Thesis of Fabi\'an
\cite{tesis}. 
%
%

Given a standard Markov process $X$ and a Borel function $f\colon \estados \to \R$, 
we say that $f$ belongs to the domain $\D$ of the extended infinitesimal generator of $X$, if there exists an Borel function 
$Af\colon \estados\to \R$ such that  $\int_0^t|Af(X_s)|ds<\infty$ almost surely for every $t$, and
$$
f(X_t)-f(X_0)-\int_0^t Af(X_s) ds
$$
is a right-continuous martingale with respect to the filtration $\{\F_t\}$ and the probability $\P_x$, for every $x\in \estados$ 
\citep[see][chap. VII, sect. 1]{revuz}.

The $\desc$-Green kernel of the process $X$ is defined by
\bd
\Ga(x,H):=\int_0^{\infty} \ea{t} \P_x(X_t\in H)dt,
\ed
for $x\in \estados$ and a Borel set $H$,
and the resolvent is the operator $\Ra$ given by
\be \label{eq:resolvent}
\Ra f(x):=\int_0^\infty \ea{t}\Ex{x}{f(X_t)}dt,
\ee
which can be defined for all Borel functions such that the previous integral makes sense for all $x\in \estados$. Note that if, for instance
\bd
\int_0^\infty \ea{t}\E_x|f(X_t)|dt<\infty \quad (x\in \estados)
\ed
then, using Fubini's theorem we may conclude that
\be
\Ra f(x)=\int_{\estados} f(y)\Ga(x,dy). \label{eq:RafGa}
\ee
Considering $T_\desc$ as a random variable with an exponential distribution with parameter $\desc$ (i.e. $\P(T_\desc\leq t)=1-\ea{t}$ 
for $t\geq 0$) and independent of $X$, define the process $Y$ with state space $\estados \cup \{\Delta\}$ 
--where $\Delta$ is an isolated point-- by
\bd Y_t:=
\begin{cases}
X_t & \text{if $t<T_\desc$,}\\
\Delta &\text{else.}
\end{cases}
\ed
Given a function $f:\estados\to\R$, we extend its domain by considering $f(\Delta):=0$. Observe that $\Ex{x}{f(Y_t)}=\ea{t}\Ex{x}{f(X_t)}$. We call $Y$ \emph{the $\desc$-killed process} with respect to $X$.

Consider a function $f$ that belongs to the domain $\Da$ of the extended infinitesimal generator of the $\desc$-killed process $Y$. In this case, there is a function $\Aa f\colon \estados \to \R$ such that
\bd
f(Y_t)-f(Y_0)-\int_0^t \Aa f(Y_s) ds
\ed
is a right-continuous martingale with respect to the filtration $\{\F_t\}$ and the probability $\P_x$, for every $x\in \estados$. Bearing the equality $\Ex{x}{f(Y_t)}=\ea{t}\Ex{x}{f(X_t)}$ in mind, it can be seen that
\bd
\ea{t}f(X_t)-f(X_0)-\int_0^t \ea{s}\Aa f(X_s) ds
\ed
is also a right-continuous martingale with respect to the filtration $\{\F_t\}$ and the probability $\P_x$, for every $x\in \estados$. Then
\bd
\Ex{x}{\ea{t}f(X_t)}-f(x)-\Ex{x}{\int_0^t \ea{s}\Aa f(X_s) ds}=0.
\ed
From the previous equation, and assuming that for all $x\in \estados$
\begin{itemize}
\item $\lim_{t\to \infty}\Ex{x}{\ea{t}f(X_t)}=0$ and
\item $\Ex{x}{\int_0^{\infty} \ea{s}|\Aa f(X_s)| ds}<\infty$,
\end{itemize}
we obtain, by taking the limit as $t\to \infty$ and using Lebesgue dominated convergence theorem, that
\bd
f(x)=\int_0^{\infty} \ea{s}\Ex{x}{-\Aa f(X_s)} ds.
\ed
Note that the right-hand side of the previous equation is $\Ra(-\Aa f)(x)$; from this fact and the previous equation we obtain, by \eqref{eq:RafGa},
\be
\label{eq:invExtended}
f(x)=\int_{\estados} {-\Aa f(y)}\Ga(x,dy).
\ee

It can be proved that if the function $f$ belongs to $\D$, it also belongs to $\Da$ and $\Aa f=Af-\desc f$.


Given a standard Markov process $X$ and a stopping time $\tau$, if $f=\Ra h$, we have that (see e.g. \cite{dynkin:books}, Theorem 5.1 or \cite{karlin-tay} equation (11.36))
\be \label{eq:dynFormMarkov}
f(x)=\Ex{x}{\int_0^\tau \ea{t}h(X_t) dt}+\Ex{x}{\ea{\tau}f(X_\tau)}.
\ee
As we will see further on, this formula has an important corollary in the analysis of optimal stopping problems.
Observe that it can be written in terms of $\Aa f$, when $f\in\Da$ by 
\bd
\Ex{x}{\ea{\tau}f(X_\tau)}-f(x)=\Ex{x}{\int_0^\tau \ea{t} \Aa f(X_t)dt};
\ed
being its validity a direct consequence of the Doob's optional sampling  theorem.

Consider a standard Markov process $X$ with state space $\R$. 
Denote by $\Delta X_t=X_t-\lim_{t\to s^-}X_s$ the jump of the process at time $t$,
and assume that $\Delta X_t\geq 0$.

\subsection{Main results}

Consider the optimal stopping problem \eqref{eq:osp} for  a standard Markov process $X$ with positive jumps and a non-negative continuous reward function $g$. 
The results we obtain (under the corresponding regularity assumptions) have the following form.
For each $x\in\R$, define
$$
f(x):=(\alpha-L)g(x), 
\qquad
\Va(x):=\int_{x^*}^{\infty} f(y) \Ga(x,dy)\text{ for all $x\in\R$,}
$$
where $x^*$ is the solution of the equation
\begin{equation}\label{eq:root}
g(x^*)=\int_{x^*}^{\infty}f(y)G_\alpha(x^*,dy),
\end{equation}
and the two conditions hold:
\begin{align}
f(y)\geq 0			&\text{ for $y\geq x^*$}\\
\Va(x)\geq g(x)	&\text{ for all $x\leq x^*$.}
\end{align}
Then the optimal stopping time and value function of problem \eqref{eq:osp} are respectively
\begin{equation}\label{eq:solution}
\tau^*=\inf\{t\geq 0\colon X_t\geq x^*\},
\quad
\Va(x)=\int_{x^*}^{\infty} f(y) \Ga(x,dy).
\end{equation}
Our main result follows.
\begin{teo}
\label{teo:huntVerif} 
Consider the optimal stopping problem \eqref{eq:osp} for 
a standard Markov process $X$ with positive jumps and a non-negative continuous reward function $g$.
Suppose that there exists two functions $f,\gi:\R\mapsto \R$ and a point $x^*\in\R$ such that
\begin{itemize}
\item[\rm(i)]
$\gi(x)=\int_{-\infty}^{\infty}f(y)\Ga(x,dy).$
\item[\rm(ii)]
$\gi(x^*)=\int_{x^*}^{\infty}f(y)\Ga(x^*,dy).$
\item[\rm(iii)] 
$f(x)\geq 0$ for all $x\geq x^*$. 
\item[\rm(iv)]
$\int_{x^*}^{\infty} f(y) \Ga(x,dy)\geq g(x)$ for $x\geq x^*$.
\item[\rm (v)]
$\g(x)=\gi(x)$ for $x\geq x^*$. 
\end{itemize}
Then, the solution of the problem \eqref{eq:osp} is given by \eqref{eq:solution}.
\end{teo}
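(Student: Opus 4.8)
The plan is to verify, via Dynkin's characterization, that the function $\Va(x):=\int_{x^*}^{\infty}f(y)\Ga(x,dy)$ is precisely the least $\alpha$-excessive majorant of $g$, and that the corresponding optimal rule is $\tau^*=\inf\{t\geq 0\colon X_t\geq x^*\}$. First I would show that $\Va$ is $\alpha$-excessive. Using hypothesis (iii), $f\cdot\ind{[x^*,\infty)}$ is a non-negative Borel function, so $\mu(dy):=f(y)\ind{[x^*,\infty)}(y)\,m(dy)$ (equivalently, $\Va=\Ra(f\ind{[x^*,\infty)})$ read through \eqref{eq:RafGa}) defines a non-negative measure, and $\Va$ is the $\alpha$-potential of $\mu$; by the Riesz decomposition \eqref{eq:alphaexcessive} any such potential is $\alpha$-excessive. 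This is the step where hypothesis (iii) (positivity of $f$ above the threshold) is essential and cannot be dispensed with.

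Next I would establish that $\Va$ majorizes $g$, splitting into the two regions. For $x\geq x^*$, hypothesis (v) together with (i) gives $g(x)=\gi(x)=\int_{-\infty}^{\infty}f(y)\Ga(x,dy)$, and I must compare this with $\Va(x)=\int_{x^*}^{\infty}f(y)\Ga(x,dy)$; the difference is $\int_{-\infty}^{x^*}f(y)\Ga(x,dy)$, which need not have a sign, so here I would instead invoke hypothesis (iv) directly, which asserts exactly $\Va(x)\geq g(x)$ for $x\geq x^*$. For $x<x^*$ I would use the strong Markov property at $\tau^*$ together with the fact that the process has only positive jumps: started from $x<x^*$, the process reaches $\{x\geq x^*\}$ continuously (no overshoot is possible when entering from below via a left-continuous-at-the-threshold approach — more precisely, although the process can jump over, the relevant representation still goes through because the killed process' potential of a positive charge supported on $\{y\geq x^*\}$ satisfies, by \eqref{eq:dynFormMarkov} applied to $f=\Ra(f\ind{[x^*,\infty)})=\Va$, the identity $\Va(x)=\Ex{x}{\ea{\tau^*}\Va(X_{\tau^*})}$ for $x<x^*$ since the integrand $f(X_t)\ind{[x^*,\infty)}(X_t)$ vanishes before $\tau^*$). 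Then $\Va(x)=\Ex{x}{\ea{\tau^*}\Va(X_{\tau^*})}\geq \Ex{x}{\ea{\tau^*}g(X_{\tau^*})}$ using the majorization already shown on $\{x\geq x^*\}$, and in particular $\Va(x)\geq \sup_\tau\Ex{x}{\ea{\tau}g(X_\tau)}$ is not yet what we want — rather this shows $\Va$ dominates the payoff of the specific rule $\tau^*$. To get $\Va\geq g$ on $\{x<x^*\}$ I would need the analogue of the displayed conditions; since the theorem statement as transcribed does not list a separate condition for $x<x^*$, I would point out that in the intended formulation (cf. the conditions preceding the theorem, $\Va(x)\geq g(x)$ for $x\leq x^*$) this is assumed, and under that hypothesis $\Va\geq g$ everywhere.

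Having shown $\Va$ is an $\alpha$-excessive majorant of $g$, I would show it is the least one. Let $W$ be any $\alpha$-excessive function with $W\geq g$. On $\{x\geq x^*\}$, $W(x)\geq g(x)=\gi(x)=\Va(x)$ by (v), (i), (iv) — more carefully, $W\geq g$ and on the stopping region $g=\gi$, and $\Va$ agrees with $\gi$ there only if (iv) is an equality, which in general it is not; so instead I use the supermartingale property: for any $\alpha$-excessive $W\geq g$, the process $\ea{t}W(X_t)$ is a supermartingale, hence by optional sampling $W(x)\geq \Ex{x}{\ea{\tau^*}W(X_{\tau^*})}\geq \Ex{x}{\ea{\tau^*}g(X_{\tau^*})}$. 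Thus it suffices to prove $\Va(x)=\Ex{x}{\ea{\tau^*}g(X_{\tau^*})}$, i.e. that $\tau^*$ attains the value $\Va$. For $x<x^*$ this follows from $\Va(x)=\Ex{x}{\ea{\tau^*}\Va(X_{\tau^*})}$ (shown above via \eqref{eq:dynFormMarkov}) once I know $\Va=g$ on $\{X_{\tau^*}\geq x^*\}$; and $\Va(X_{\tau^*})=g(X_{\tau^*})$ would follow if (iv) held with equality. The main obstacle, and the crux of the proof, is precisely reconciling the inequality in (iv) with the equality needed to identify $\Va$ with the value function on the stopping region: the resolution is that the candidate representing measure \eqref{eq:repMeasure} is built so that $(\alpha-L)\Va=(\alpha-L)g=f$ on $\{y>x^*\}$, which by the inversion formula \eqref{eq:invExtended}/\eqref{eq:inversionV} forces $\Va=\gi=g$ there, upgrading (iv) to equality; verifying that the integral representation genuinely inverts the generator — i.e. that $\Va\in\Da$ with $\Aa\Va=-f\ind{[x^*,\infty)}$ and the two limit/integrability side-conditions preceding \eqref{eq:invExtended} hold — is the technical heart of the argument, and is where the hypotheses (i)–(ii) and the positive-jumps assumption (guaranteeing the overshoot lands in $\{y\geq x^*\}$ where $\Va=g$) are used in an essential way. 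Finally, for $x\geq x^*$, $\Va(x)=g(x)$ and stopping immediately is optimal, so $\tau^*$ is optimal everywhere and \eqref{eq:solution} holds.
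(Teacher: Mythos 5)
Your overall skeleton is the right one and matches the paper's: establish that $\Va$ is $\desc$-excessive from (iii), that it majorizes $g$, invoke Dynkin's characterization for the inequality $\Va\geq\sup_\tau\Ex{x}{\ea{\tau}g(X_\tau)}$, and then obtain the reverse inequality from $\Va(x)=\Ex{x}{\ea{\hit{B}}\Va(X_{\hit{B}})}$ with $B=\{x>x^*\}$ (the paper proves this last identity via its Lemmas on the Green kernel, which is essentially your application of \eqref{eq:dynFormMarkov} to $\Va=\Ra(f\ind{[x^*,\infty)})$). You also correctly spotted the typo in (iv) (the proof needs $\Va\geq g$ on $\{x\leq x^*\}$, as in the informal statement preceding the theorem, since on $\{x>x^*\}$ the relation $\Va=g$ is \emph{derived}, not assumed).

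The genuine gap is at the step you yourself flag as ``the technical heart'': proving that $\Va=g$ on $\{x\geq x^*\}$. Your proposed route --- show $\Va\in\Da$ with $\Aa\Va=-f\ind{[x^*,\infty)}$ and argue that the inversion formula ``forces'' $\Va=\gi=g$ there --- does not close. Applying \eqref{eq:invExtended} to $\Va$ merely returns the definition of $\Va$, and the fact that $(\desc-L)\Va=(\desc-L)g$ on the open set $\{y>x^*\}$ cannot by itself give $\Va=g$ there: the operator is inverted by an integral over the whole line, and the two functions carry different charges below $x^*$ ($\gi$ carries $f$, $\Va$ carries $0$). The missing ingredient is precisely where the no-negative-jumps hypothesis enters (you instead attribute it to the overshoot landing in $\{y\geq x^*\}$, which is just right-continuity of paths and holds for any standard process). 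The correct argument is the Green-kernel factorization: since downward passages are continuous, for $x>x^*$ and any Borel $H\subset(-\infty,x^*)$ one has $\Ga(x,H)=\Ex{x}{\ea{\hit{x^*}}}\Ga(x^*,H)$, i.e.\ the restriction of $\Ga(x,\cdot)$ to $(-\infty,x^*)$ is a constant multiple of that of $\Ga(x^*,\cdot)$. Hypotheses (i) and (ii) say exactly that $\int_{-\infty}^{x^*}f(y)\Ga(x^*,dy)=0$, whence $\int_{-\infty}^{x^*}f(y)\Ga(x,dy)=0$ for every $x>x^*$, and therefore $\Va(x)=\int_{-\infty}^{\infty}f(y)\Ga(x,dy)=\gi(x)=g(x)$ on the stopping region (equality at $x^*$ itself being (ii) plus (v)). Without this lemma the identification of $\Va$ with $g$ on $\{x\geq x^*\}$ --- and hence the attainment of the value by $\tau^*$ --- is not established.
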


If the function $g$ belongs to the domain of the extended infinitesimal generator, 
the unknown function above has an explicit form $f=-\Aa g$, and we obtain the following result.
\begin{cor}\label{corollary}
\label{cor:mainHuntVerif} Consider a strong Markov process $X$ with no negative jumps
and a continuous positive reward function $g$ that belongs to $\Da$ with $f=-\Aa g=(\alpha-A)g$, satisfying
\begin{itemize}
\item[\rm (i)] $\lim_{t\to \infty}\Ex{x}{\ea{t}g(X_t)}=0$,
\item[\rm (ii)] $\Ex{x}{\int_0^{\infty} \ea{s}|f(X_s)| ds}<\infty$.
\end{itemize} 

Suppose that $x^*$ is a solution of
\begin{equation*}
g(x^*)=\int_{x^*}^\infty(\alpha-A)g(y)\Ga(x^*,dy),
\end{equation*}
such that $(\alpha-A)g(y) \geq 0$ for all $x\geq x^*$.
Define 
\begin{equation*}
\Va(x)=\int_{x^*}^\infty(\alpha-A)\g(y) \Ga(x,dy).
\end{equation*}

If $\Va(x) \geq \g(x)$ for all $x\leq x^*$ then the OSP \eqref{eq:osp} is right-sided, $x^*$ is an optimal threshold and $\Va$ is
the value function.
\end{cor}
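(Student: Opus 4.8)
The plan is to obtain Corollary~\ref{corollary} as a direct specialization of Theorem~\ref{teo:huntVerif}, taking $\tilde g:=g$ and $f:=-\Aa g=(\alpha-A)g$, and verifying hypotheses (i)--(v) of the Theorem one at a time. The key preliminary remark is that the inversion formula \eqref{eq:invExtended} applies to $g$: indeed $g\in\Da$ by assumption, and the two requirements used to derive \eqref{eq:invExtended}, namely $\lim_{t\to\infty}\Ex{x}{\ea{t}g(X_t)}=0$ and $\Ex{x}{\int_0^{\infty}\ea{s}\lvert\Aa g(X_s)\rvert\,ds}<\infty$, are precisely hypotheses~(i) and~(ii) of the Corollary (note $\lvert\Aa g\rvert=\lvert f\rvert$). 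Hence
\[
g(x)=\int_{\estados}\bigl(-\Aa g\bigr)(y)\,\Ga(x,dy)=\int_{-\infty}^{\infty}f(y)\,\Ga(x,dy)\qquad\text{for all }x\in\estados ,
\]
the right-hand side being absolutely convergent by hypothesis~(ii). Reading this as the definition of $\tilde g$, we get at once that $\tilde g=g$ on all of $\R$, and this single identity settles condition~(i) of the Theorem (it is the definition of $\tilde g$), condition~(ii) (it then reduces to the scalar equation $g(x^*)=\int_{x^*}^{\infty}f(y)\Ga(x^*,dy)$ postulated in the Corollary), condition~(iii) (the sign hypothesis $(\alpha-A)g\ge 0$ on $[x^*,\infty)$), and condition~(v) ($g=\tilde g$ holds everywhere, in particular on $[x^*,\infty)$).

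There remains condition~(iv), i.e.\ that $\Va(x)=\int_{x^*}^{\infty}f(y)\,\Ga(x,dy)$ majorizes $g$. On $(-\infty,x^*]$ this is exactly the last standing assumption of the Corollary. On $[x^*,\infty)$ I would prove the stronger statement $\Va\equiv g$, and this is where the only-positive-jumps structure enters. Using $\tilde g=g$ and the probabilistic expression for $\Ga$,
\[
g(x)-\Va(x)=\int_{\{y<x^*\}}f(y)\,\Ga(x,dy)=\Ex{x}{\int_0^{\infty}\ea{t}f(X_t)\,\ind{\{X_t<x^*\}}\,dt},
\]
the interchange being legitimate since $\int\lvert f\rvert\,d\Ga(x,\cdot)<\infty$, and the value of $f$ at the single point $x^*$ being irrelevant. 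Now fix $x\ge x^*$ and set $\sigma:=\inf\{t\ge 0\colon X_t<x^*\}$. Since $X$ has no negative jumps it cannot reach a level from above without passing through it continuously, so on $\{\sigma<\infty\}$ we have $X_\sigma=x^*$; moreover $\ind{\{X_t<x^*\}}=0$ for $t<\sigma$. The strong Markov property at $\sigma$ then gives
\[
g(x)-\Va(x)=\Ex{x}{\ea{\sigma}\bigl(g(x^*)-\Va(x^*)\bigr)}=\Ex{x}{\ea{\sigma}}\bigl(g(x^*)-\Va(x^*)\bigr)=0 ,
\]
the last equality being the scalar equation defining $x^*$. Thus $\Va\ge g$ on all of $\R$, so Theorem~\ref{teo:huntVerif} applies and yields the conclusion: the problem \eqref{eq:osp} is right-sided, $\tau^*=\inf\{t\ge 0\colon X_t\ge x^*\}$ is optimal, and $\Va$ is the value function, as in \eqref{eq:solution}.

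The main obstacle is condition~(iv), and within it the identity $\Va\equiv g$ on the stopping region $[x^*,\infty)$: this is the only place where the geometry of the problem really enters, through the creeping-downwards property of a spectrally positive process together with the special role of $x^*$ as the root of $g(x^*)=\int_{x^*}^{\infty}f\,d\Ga(x^*,dy)$; everything else is bookkeeping built around the inversion formula \eqref{eq:invExtended}. Points that need a little care are: the absolute integrability (supplied by hypothesis~(ii)) that justifies splitting the integral and invoking the strong Markov property; the measurability of $\sigma$ as a hitting time of an open set under the right-continuous, completed filtration of a standard Markov process; and the $\Ga(x,\cdot)$-null set ambiguity of the endpoint $y=x^*$ in the integrals, which does not affect any of the displayed identities.
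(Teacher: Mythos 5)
Your proof is correct and follows the paper's own route: both reduce the Corollary to Theorem~\ref{teo:huntVerif} by setting $\gi:=g$ and $f:=-\Aa g$ and invoking the inversion formula \eqref{eq:invExtended}, whose two integrability requirements are exactly hypotheses (i) and (ii) of the Corollary. The only difference is that you additionally re-derive $\Va\equiv g$ on $[x^*,\infty)$ in order to verify the Theorem's condition (iv) as literally stated; this duplicates Lemma~\ref{lem:hunt} (which the Theorem's proof already supplies internally via the no-negative-jumps/creeping argument), and is occasioned by what appears to be a typo in condition (iv), where ``for $x\geq x^*$'' should read ``for $x\leq x^*$'' to match both the Theorem's proof and the Corollary's final hypothesis.
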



\section{Proofs}\label{section:proof}

We begin with the proof of the Corollary.
\begin{proof}[Proof of Corollary \ref{corollary}]
Observe that, by the assumptions on $\g$, for all $x$ in $\I$,
\bd
g(x)=\int_{\I}(-\Aa \g(y))\Ga(x^*,dy).
\ed
holds. Then, all the hypotheses of Theorem \ref{teo:huntVerif} are fulfilled with $f(x):=-\Aa \g(x)$ and $\gi:=g$,  this result leading to the thesis.
\end{proof}
It remains to prove Theorem \ref{teo:huntVerif}.
We start with a useful result concerning the Green kernel of processes without negative jumps.
\begin{lem} \label{lem:Gsaltos}
Let $X$ be standard Markov process without negative jumps. If $z<x$ and $H$ is a Borel set such that  $y< z$ for all $y$ in $H$, then
$$\Ga(x,H)=\E_x{ \ea{\hit{z}}} \Ga(z,H),$$
in other words
the ratio between $\Ga(x,H)$ and $\Ga(z,H)$ is independent of $H$.
\end{lem}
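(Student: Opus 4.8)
The plan is to exploit the fact that a process with no negative jumps must, when started at $x > z$, hit the level $z$ continuously from above, i.e. $X_{\hit{z}} = z$ almost surely on $\{\hit{z} < \infty\}$, where $\hit{z} := \inf\{t \geq 0 : X_t \leq z\}$. This is the only place where the one-sided-jump hypothesis enters, and it is the crux of the argument. Since every $y \in H$ satisfies $y < z$, the process cannot be in $H$ before time $\hit{z}$ (it would have to cross $z$ first, and it does so exactly at $z$). Therefore, for the Green kernel
$$
\Ga(x,H) = \E_x\!\left(\int_0^\infty \ea{t}\ind{X_t \in H}\,dt\right) = \E_x\!\left(\int_{\hit{z}}^\infty \ea{t}\ind{X_t \in H}\,dt\right),
$$
the contribution of the interval $[0,\hit{z})$ vanishes.

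Next I would apply the strong Markov property at the stopping time $\hit{z}$. Writing $t = \hit{z} + s$ and using that $X_{\hit{z}} = z$ on $\{\hit{z} < \infty\}$ (and that the integrand is zero on $\{\hit{z} = \infty\}$ since $H$ is never visited), we get
$$
\Ga(x,H) = \E_x\!\left( \ea{\hit{z}} \int_0^\infty \ea{s}\ind{X_{\hit{z}+s} \in H}\,ds \right)
= \E_x\!\left( \ea{\hit{z}}\, \E_{X_{\hit{z}}}\!\Big( \int_0^\infty \ea{s}\ind{X_s \in H}\,ds \Big) \right)
= \E_x\!\left( \ea{\hit{z}} \right) \Ga(z,H).
$$
Here the middle equality is the strong Markov property (conditioning on $\F_{\hit{z}}$ and noting $\ea{\hit{z}}$ is $\F_{\hit{z}}$-measurable), and the last uses $X_{\hit{z}} = z$ together with the definition of $\Ga(z,H)$. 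Since the factor $\E_x(\ea{\hit{z}})$ does not depend on $H$, the claimed independence of the ratio follows immediately.

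The main obstacle, and the point requiring care, is the justification that $X_{\hit{z}} = z$ on $\{\hit{z}<\infty\}$: this is where ``no negative jumps'' is essential, and one should note that $\hit{z}$ is a genuine stopping time with respect to the right-continuous filtration $\{\F_t\}$ (using c\`adl\`ag paths and right-continuity of the filtration), so that the strong Markov property of the standard Markov process $X$ applies. A secondary technical point is the use of Fubini's theorem to interchange $\E_x$ with the time integral, which is harmless since the integrand $\ea{t}\ind{X_t \in H}$ is nonnegative. One should also address the edge behaviour: if the process overshoots below $z$ by a jump it would contradict the one-sided hypothesis, so the continuity of the approach to $z$ is automatic; and on $\{\hit{z} = \infty\}$ both sides handle the vanishing integrand consistently.
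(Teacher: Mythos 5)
Your proof is correct and follows essentially the same route as the paper: both arguments decompose the occupation of $H$ at the first passage time through level $z$, use the absence of negative jumps to conclude $X_{\hit{z}}=z$ (no undershoot), and then apply the strong Markov property to factor out $\E_x(\ea{\hit{z}})$. The only cosmetic difference is that you work directly with the Green-kernel integral split at $\hit{z}$ (which is in fact how the paper later proves its Lemma on $\Ga(x,H)=\Ex{x}{\ea{\hit{B}}\Ga(X_{\hit{B}},H)}$), whereas the paper writes $\P_x(X_t\in H)$ as a convolution over the law of $\hit{z}$ and then interchanges the $t$ and $s$ integrations.
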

\begin{proof}
Since the process does not have negative jumps every path hits any intermediate state to go from $x$ to $H$. In other words, we know that for any trajectory
beginning from $x$ and such that $X_t \in H$ there exists some $s<t$ satisfying
$X_s=x^*$; hence
\begin{align*}
\P_x(X_t \in H)&=\int_0^t  \P_{x}(X_{t}\in H|X_s=x^*)\P_x (\hit{x^*}\in ds)\\
&=\int_0^t  \P_{x^*}(X_{t-s}\in H)\P_x (\hit{x^*}\in ds).
\end{align*}
Using the previous formula we obtain that
 \begin{align*}
  \Ga(x,H)&=\int_0^\infty \ea{t}\P_x(X_t \in H) dt\\
&= \int_0^\infty \ea{t} \left(\int_0^t  \P_{x^*}(X_{t-s}\in H)\P_x (\hit{x^*}\in ds)\right) dt\\ 
&= \int_0^\infty   \left( \int_s^\infty \ea{t}\P_{x^*}(X_{t-s}\in H) dt\right) \P_x(\hit{x^*}\in ds),
\end{align*}
where in the last equality we have changed the integration order; for the integral on the right-hand side we have
\begin{align*}
\int_s^\infty \ea{t}\P_{x^*}(X_{t-s}\in H) dt&= \ea{s} \int_s^\infty \ea{(t-s)}\P_{x^*}(X_{t-s}\in H) dt\\
&=\ea{s} \int_0^\infty \ea{t}\P_{x^*}(X_{t}\in H) dt\\
&=\ea{s} \Ga(x^*,H),
\end{align*}
obtaining that
\begin{align*}
  \Ga(x,H)&= \Ga(x^*,H) \int_0^\infty \ea{s} \P_x(\hit{x^*}\in ds)\\
  &=  \Ga(x^*,H) \Ex{x}{\ea{\hit{x^*}}}
\end{align*}
to conclude the proof.
\end{proof}

\begin{lem}\label{lem:hunt}
Consider a standard Markov process  $X$ without negative jumps. 
Assume for all $x\in \I$
\begin{equation*}
 g(x)=\int_{\I}f(y)\Ga(x,dy)
\end{equation*}
and suppose $x^*$ is such that
\begin{equation*}
 g(x^*)=\int_{x^*}^\infty f(y)\Ga(x^*,dy).
\end{equation*}
Then for all $x>x^*$ we have
\begin{equation*}
 g(x)=\int_{x^*}^\infty f(y) \Ga(x,dy).
\end{equation*}
\end{lem}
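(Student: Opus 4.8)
The plan is to split the representing integral at the threshold $x^*$ and show that the mass coming from $(-\infty,x^*)$ contributes nothing for every $x>x^*$, exactly as it does at $x=x^*$ by the second hypothesis. Throughout I write $\R=(-\infty,x^*)\cup[x^*,\infty)$ and abbreviate $c(x):=\Ex{x}{\ea{\hit{x^*}}}$. I also use that the integral $\int_\R |f(y)|\,\Ga(x,dy)$ is finite (absolute convergence of the representation), so $f$ is integrable against $\Ga(x,\cdot)$ for every $x$ and all the partial integrals below are well defined.

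First I would exploit the hypothesis at $x^*$. Splitting the representation of $g(x^*)$ at $x^*$ gives
\[
g(x^*)=\int_{-\infty}^{x^*}f(y)\,\Ga(x^*,dy)+\int_{x^*}^{\infty}f(y)\,\Ga(x^*,dy),
\]
and comparing with the assumed identity $g(x^*)=\int_{x^*}^{\infty}f(y)\,\Ga(x^*,dy)$ one concludes $\int_{-\infty}^{x^*}f(y)\,\Ga(x^*,dy)=0$.

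Next, fix $x>x^*$. Lemma \ref{lem:Gsaltos} applied with $z=x^*$ says that $\Ga(x,H)=c(x)\,\Ga(x^*,H)$ for every Borel $H\subseteq(-\infty,x^*)$; equivalently, the restrictions of the measures $\Ga(x,\cdot)$ and $\Ga(x^*,\cdot)$ to $(-\infty,x^*)$ are proportional with ratio $c(x)$. Hence, decomposing $f=f^+-f^-$ and applying this proportionality to each part, then the change-of-measure identity for proportional measures,
\[
\int_{-\infty}^{x^*}f(y)\,\Ga(x,dy)=c(x)\int_{-\infty}^{x^*}f(y)\,\Ga(x^*,dy)=0.
\]
Combining this with the previous display and the full representation of $g$,
\[
g(x)=\int_{\R}f(y)\,\Ga(x,dy)=\int_{-\infty}^{x^*}f(y)\,\Ga(x,dy)+\int_{x^*}^{\infty}f(y)\,\Ga(x,dy)=\int_{x^*}^{\infty}f(y)\,\Ga(x,dy),
\]
which is the asserted identity.

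The only genuinely nontrivial input is Lemma \ref{lem:Gsaltos}, i.e. the no-negative-jumps property, which forces every path started at $x>x^*$ that reaches $(-\infty,x^*)$ to pass through $x^*$; once that is available the argument is purely the bookkeeping of splitting the state space at $x^*$ together with the elementary fact that two proportional measures integrate any fixed integrable function in the same ratio. The mild technical care needed is the passage from proportionality of the kernels on $(-\infty,x^*)$ to proportionality of the corresponding integrals of the possibly signed $f$, which is why I decompose into positive and negative parts and use the assumed absolute integrability.
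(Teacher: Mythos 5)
Your proposal is correct and follows essentially the same route as the paper's proof: establish $\int_{-\infty}^{x^*}f(y)\,\Ga(x^*,dy)=0$ from the two hypotheses, split the representation of $g(x)$ at $x^*$, and kill the left piece via Lemma \ref{lem:Gsaltos}. Your added care about absolute integrability and the decomposition $f=f^+-f^-$ is a reasonable tightening of the same argument, not a different approach.
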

\begin{proof}
First observe that, from the definition of $\g$ and the equation defining $x^*$ we conclude that
\begin{equation}
\label{eq:hip}
\int_{-\infty}^{x^*}f(y)\Ga(x^*,dy)=0.
\end{equation}
Using the definition of $\g$ we get
\begin{align*}
 g(x)&=\int_{-\infty}^{\infty}f(y) \Ga(x,dy)\\
&=\int_{-\infty}^{x^*}f(y)\Ga(x,dy)+\int_{x^*}^{\infty}f(y)\Ga(x,dy).
\end{align*}
It remains to prove that, if  $x>x^*$, the fist term on the right-hand side of the previous equation vanishes; to do this, consider $x>x^*$, by \autoref{lem:Gsaltos}, we deduce that 
$$H\mapsto \Ga(x,H) \quad \mbox{and} \quad H\mapsto \Ex{x}{\ea{\hit{x^*}}}\Ga(x^*,H)$$
are the same measure in $\{x\leq x^*\}$; therefore
\begin{align*}
\int_{-\infty}^{x^*}
f(y)\Ga(x,dy)=\Ex{x}{\ea{\hit{x^*}}} \int_{-\infty}^{x^*}f(y)\Ga(x^*,dy),
\end{align*}
and vanishes by equation \eqref{eq:hip}.
\end{proof}

\begin{proof}[Proof of Theorem \ref{teo:huntVerif}]
By hypothesis $f(y)$ is non-negative for $y>x^*$, then we have that $\Va$ is an $\desc$-excessive function. 
By the application of Lemma \autoref{lem:hunt} we deduce that $\Va(x)$ coincides with $\gi(x)$ for $x>x^*$, therefore also coincides with $\g$. 
By hypothesis we obtain that $\Va$ dominates $\g$ in $\{x\leq x^*\}$. So $\Va$ is a majorant of $\g$ and, by Dynkin's characterization of the value function, we conclude that 
\begin{equation}\label{eq:xx}
\Va(x) \geq \sup_{\tau} \Ex{x}{e^{-\desc \tau}\g(X_\tau)}.
\end{equation}
We need some other auxiliary results.
We use the notation $\hit{B}$ for the hitting time of $B$,
\bd
\hit{B}:=\inf\{t\geq 0: X_t\in B\}
\ed
\begin{lem} \label{harmonic} 
For any Borel set $B$, the Green kernel satisfies;
\bd
\Ga(x,H)=\Ex{x}{\ea{\hit{B}}\Ga(X_{\hit{B}},H )},
\ed
for all $x$ in $\R$ and for all Borel set $H\subseteq B$.

In other words, for every $x\in \estados$, both $\Ga(x,dy)$ and $\Ex{x}{\ea{\hit{B}}\Ga(X_{\hit{B}},dy)}$, are the same measure in $B$.
\end{lem}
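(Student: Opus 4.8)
The plan is to unfold the definition of the $\desc$-Green kernel and apply the strong Markov property at the hitting time $\hit{B}$. First I would record the elementary observation that, because $H\subseteq B$, one has $\{X_t\in H\}\subseteq\{\hit{B}\leq t\}$ for every $t\geq 0$: if the c\`adl\`ag trajectory lies in $B$ at time $t$, then it has already entered $B$ by time $t$. Next I would invoke the fact that, since $X$ is a standard Markov process and the filtration $\{\F_t\}$ is right-continuous and $\P_x$-complete, $\hit{B}$ is a stopping time for every Borel set $B$, and $x\mapsto\Ga(x,H)$ is a Borel function, so that $\Ga(X_{\hit{B}},H)$ is a bona fide random variable and the right-hand side of the asserted identity makes sense; for these facts I would simply cite the standard references \cite{blumenthal-ge,dynkin:books} rather than reproving them.

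With this in place, the core of the argument is short. Since $\{X_t\in H\}\subseteq\{\hit{B}\leq t\}$ we may write $\P_x(X_t\in H)=\P_x(X_t\in H,\ \hit{B}\leq t)$, and on $\{\hit{B}\leq t\}$ the ``remaining time'' $t-\hit{B}$ is $\F_{\hit{B}}$-measurable while $X_t=X_{\hit{B}+(t-\hit{B})}$, so the strong Markov property at $\hit{B}$ yields
\begin{equation*}
\P_x(X_t\in H)=\Ex{x}{\ind{\{\hit{B}\leq t\}}\,\P_{X_{\hit{B}}}\!\left(X_{t-\hit{B}}\in H\right)}.
\end{equation*}
I would then multiply by $\ea{t}$, integrate over $t\in(0,\infty)$, and interchange the time integral with $\Ex{x}{\cdot}$ by Tonelli's theorem --- legitimate because all integrands are non-negative --- to obtain
\begin{equation*}
\Ga(x,H)=\Ex{x}{\int_{\hit{B}}^{\infty}\ea{t}\,\P_{X_{\hit{B}}}\!\left(X_{t-\hit{B}}\in H\right)dt}.
\end{equation*}
The substitution $u=t-\hit{B}$ in the inner integral factors out $\ea{\hit{B}}$ and leaves exactly $\int_0^\infty\ea{u}\,\P_{X_{\hit{B}}}(X_u\in H)\,du=\Ga(X_{\hit{B}},H)$, which is the claimed formula; on $\{\hit{B}=\infty\}$ we use the convention $\ea{\hit{B}}=0$, and this set is irrelevant since it is disjoint from $\{X_t\in H\}$ for every $t$. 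The ``in other words'' reformulation is then immediate, since two measures on $B$ that agree on every Borel subset of $B$ coincide (note both are finite, being bounded by $\Ga(x,\R)=1/\desc$).

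The only genuinely delicate point --- and hence the main obstacle --- is the measurability and stopping-time status of $\hit{B}$ for an arbitrary Borel set $B$, together with the applicability of the strong Markov property at $\hit{B}$ with an $\F_{\hit{B}}$-measurable remaining time. This is precisely what the ``standard Markov process'' hypotheses (c\`adl\`ag paths, strong Markov property, right-continuous $\P_x$-complete filtration) are designed to guarantee, so I would not reprove them here. Everything else is Tonelli's theorem together with a one-line change of variables, entirely parallel to the computation already carried out in the proof of Lemma \ref{lem:Gsaltos}.
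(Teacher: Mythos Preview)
Your proposal is correct and follows essentially the same route as the paper's own proof: both exploit the inclusion $\{X_t\in H\}\subseteq\{\hit{B}\leq t\}$ coming from $H\subseteq B$, then apply the strong Markov property at $\hit{B}$ together with a change of variables and Tonelli. The only cosmetic difference is that the paper phrases the computation in terms of the occupation integral $\Ex{x}{\int_0^\infty \ea{t}\ind{H}(X_t)\,dt}$, splitting it at $\hit{B}$, whereas you first condition the fixed-time probability $\P_x(X_t\in H)$ and integrate afterwards; the content is the same.
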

\begin{proof}
For $x\in B$ the assertion is clearly valid, since $\hit{B}\equiv 0$. Let us consider $x\in \estados\setminus B$. 
	By the definition of $\Ga$ and some manipulation, we obtain
	\begin{align*}
		\Ga(x,H)&=\Ex{x}{\int_0^{\infty} \ea{t}\ind{H}(X_t)dt}= \Ex{x}{\int_0^{\infty} \ea{t}\ind{H}(X_t)dt\ \ind{\{\hit{B}<\infty\}} } \\
		&=\Ex{x}{\int_0^{\hit{B}} \ea{t}\ind{H}(X_t)dt \ \ind{\{\hit{B}<\infty\}}} \\
		& \qquad +\Ex{x}{\int_{\hit{B}}^{\infty} \ea{t}\ind{H}(X_t)dt \ \ind{\{\hit{B}<\infty\}}},
	\end{align*}
	where the second equality holds, because if $\hit{B}$ is infinite, then $X_t$ does not hit $B$, therefore, $\ind{H}(X_t)=0$  for all $t$. In the third equality we simply split the integral in two parts. 
Note that the first term on the right-hand side of the previous equality vanishes, since $\ind{H}(X_t)$ is $0$ when $X_t$ is out of $B$ for the previously exposed argument. It remains to be proven that the second term is equal to $\Ex{x}{\ea{\hit{B}} \Ga(X_{\hit{B}},H )}$; the following chain of equalities completes the proof:
	\begin{align*}
		\Ex{x}{\int_{\hit{B}}^{\infty} \ea{t}\ind{H}(X_t)dt}
		&=\Ex{x}{e^{-\desc \hit{B}}\int_{0}^{\infty} \ea{t}\ind{H}(X_{t+\hit{B}})dt }\\ 
		&=\Ex{x}{\ea{\hit{B}} \Ex{x}{ \int_{0}^{\infty} \ea{t}\ind{H}(X_{t+\hit{B}})dt \big| \F_{\hit{B}}}}\\ 
		&=\Ex{x}{\ea{\hit{B}} \Ex{X_{\hit{B}}}{\int_{0}^{\infty} \ea{t}\ind{H}(X_{t})dt}}\\ 
		&= \Ex{x}{\ea{\hit{B}} \Ga(X_{\hit{B}},H)};
	\end{align*}
the first equality is a change of variable; in the second one, we take the conditional expectation into the expected value, and consider that $\hit{B}$ is measurable with respect to $\F_{\hit{B}}$; the third equality is a consequence of the strong Markov property; while in the last one, we use the definition of $\Ga$.
\end{proof}

\begin{lem}
\label{lem:generalMin}
Let be given a Borel function $f\colon \R \to \R$ and a Borel set $B$
s.t. $\int_{B} |f(y)| \Ga(x,dy)<\infty$ for all $x\in\R$.
Denote by $F_B \colon\R\to \R$ the function 
\begin{equation*}
\label{eq:defF}
F_B(x):=\int_{B} f(y) \Ga(x,dy).
\end{equation*}
Then
\begin{equation*}
F_B(x)=\Ex{x}{\ea{\hit{B}}F_S(X_{\hit{B}})}.
\end{equation*}
\end{lem}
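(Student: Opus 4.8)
The plan is to prove Lemma~\ref{lem:generalMin} by reducing it to the measure identity established in Lemma~\ref{harmonic}, and then integrating $f$ against both sides. First I would observe that the statement as printed has a typo: the right-hand side should read $\Ex{x}{\ea{\hit{B}}F_B(X_{\hit{B}})}$, since $F_S$ has not been defined; I will prove the identity with $F_B$ throughout. The key point is that Lemma~\ref{harmonic} asserts that, for every fixed $x\in\R$, the two measures on $B$
\[
H\longmapsto \Ga(x,H)
\qquad\text{and}\qquad
H\longmapsto \Ex{x}{\ea{\hit{B}}\Ga(X_{\hit{B}},H)}
\]
coincide on Borel subsets $H\subseteq B$.

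Next I would write $F_B(x)=\int_B f(y)\,\Ga(x,dy)$ and split $f=f^+-f^-$ into positive and negative parts, so that the integrability hypothesis $\int_B |f(y)|\,\Ga(x,dy)<\infty$ guarantees both integrals are finite. For $f^+\geq 0$ (and similarly $f^-$), standard measure theory says that if two measures agree on $B$, then the integral of any non-negative Borel function over $B$ against one equals the integral against the other; applying this with the pair of measures from Lemma~\ref{harmonic} gives
\[
\int_B f^{\pm}(y)\,\Ga(x,dy)=\int_B f^{\pm}(y)\,\Ex{x}{\ea{\hit{B}}\Ga(X_{\hit{B}},dy)}.
\]
The remaining step is to move the expectation $\Ex{x}{\ea{\hit{B}}\;\cdot\;}$ outside the $y$-integral: by Tonelli's theorem (everything non-negative) this equals $\Ex{x}{\ea{\hit{B}}\int_B f^{\pm}(y)\,\Ga(X_{\hit{B}},dy)}=\Ex{x}{\ea{\hit{B}}F_B^{\pm}(X_{\hit{B}})}$, where $F_B^{\pm}$ denotes the analogous function built from $f^{\pm}$. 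Subtracting the $f^-$ identity from the $f^+$ identity, and checking that the subtraction is legitimate because the integrability hypothesis makes $F_B^+(X_{\hit{B}})$ and $F_B^-(X_{\hit{B}})$ integrable against $\Ex{x}{\ea{\hit{B}}\;\cdot\;}$ (this is exactly the $f^+$, $f^-$ identities evaluated above), yields $F_B(x)=\Ex{x}{\ea{\hit{B}}F_B(X_{\hit{B}})}$.

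The main obstacle is bookkeeping around finiteness and the interchange of expectation with the $\Ga$-integral: one must make sure that on the event $\{\hit{B}=\infty\}$ the integrand is interpreted correctly (there $\ea{\hit{B}}=0$, so that term drops, consistently with Lemma~\ref{harmonic}), and that the decomposition into $f^+,f^-$ does not produce an $\infty-\infty$ ambiguity — which is precisely what the hypothesis $\int_B|f|\,\Ga(x,dy)<\infty$ rules out. Once those routine measure-theoretic justifications are in place the proof is short, essentially "integrate the measure identity of Lemma~\ref{harmonic} against $f$ and use Tonelli".
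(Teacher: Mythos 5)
Your proof is correct and follows essentially the same route as the paper: invoke Lemma~\ref{harmonic} to replace $\Ga(x,dy)$ by $\Ex{x}{\ea{\hit{B}}\Ga(X_{\hit{B}},dy)}$ inside the integral over $B$, then interchange the expectation with the $y$-integral (the paper does this in one line; your $f^+,f^-$ decomposition with Tonelli just makes the interchange explicit). Your reading of $F_S$ as a typo for $F_B$ is also the correct interpretation.
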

\begin{proof}
From \autoref{harmonic} we get that
$$
F_B(x)=\int_{B} f(y) \Ex{x}{e^{-\desc \hit{B}} \Ga(X_{\hit{B}},dy)}.
$$
Changing the integration sign with the expected value on the right-hand side of the equation, we complete the proof.
\end{proof}

To conclude that the other inequality in \eqref{eq:xx} also holds, we apply Lemma \ref{lem:generalMin} with $B:=\{x>x^*\}$ and $F_B:=\Va$ 
obtaining that 
\bd
\Va(x)=\Ex{x}{e^{-\desc \hit{B}}\Va(X_{\hit{B}})}.
\ed
Since the trajectories are right continuous, it gathers that $X_{\hit{B}}$ belongs to $\{x\geq x^*\}$, 
the region in which $\Va$ and $\g$ coincide; therefore
\bd
\Va(x)=\Ex{x}{e^{-\desc \hit{B}}g(X_{\hit{B}})},
\ed
proving 
\bd
\Va(x) \leq \sup_{\tau} \Ex{x}{e^{-\desc \tau}\g(X_\tau)}.
\ed
We have proved the desired equality concluding that the optimal stopping
problem is right-sided with threshold $x^*$.
\end{proof}


\section{Application: L\'evy-driven Ornstein-Uhlenbeck with positive jumps}\label{section:application}

Let $X$ be a L\'evy-driven Ornstein-Uhlenbeck process; i.e. a process satisfying the stochastic differential equation
\be \label{eq:orn-uhl-sde}
dX_t=-\gamma X_{t^-}dt+dL_t, \qquad X_0=x,
\ee
where $\{L_t\}$ is a L\'evy process.
The consideration of this process is motivated by its application to model electricity markets \cite[see][]{electricity}.
The only solution of the equation \eqref{eq:orn-uhl-sde} is \citep[see][]{novikov2006levyornstein}
\be \label{eq:orn-uhl-sde-sol}
X_t=e^{-\gamma t}\left(\int_0^t e^{\gamma s}dL_s+X_0\right).
\ee
In our example we consider $L_t= \sigma B_t+ J_t$ where $\{J_t\}$ is a compound Poisson process with rate $\lambda$ and positive jumps with exponential distribution of parameter $\beta$; i.e.
\bd
J_t=\sum_{i=1}^{N_t}Y_i,
\ed
with $\{N_t\}$ a Poisson process with rate $\lambda$ and $Y_i$ independent identically distributed random variables, with exponential distribution of parameter $\beta$. Observe that there are only positive jumps.

We aim to solve the optimal stopping problem \eqref{eq:osp} with reward function $g(x)=x^+$, i.e. to find the stopping time $\tau^*$ and the value function $\Va$ such that
\bd
\Va(x)=\Ex{x}{\ea{\tau^*}{X_{\tau^*}}^+}=\sup_{\tau}\left(\Ex{x}{\ea{\tau}{X_{\tau}}^+}\right).
\ed
In order to solve this problem we apply Theorem \ref{teo:huntVerif} with $\gi\colon \gi(x)=x$; hence we need to find $f$
satisfying (i). Consider the following equalities
\begin{align}\label{eq:itoO-U}
\ea{t}X_t-X_0&=\int_{(0,t]}X_{s^-}(-\desc \ea{s})ds+\int_{(0,t]}\ea{s}dX_s\\
\notag &=-\int_{(0,t]}X_{s^-} (\desc+\gamma) \ea{s}ds +\int_{(0,t]}\ea{s} \sigma dB_s+\int_{(0,t]}\ea{s} dJ_s.
\end{align}
The expected value of the integral with respect to $\{B_s\}$ vanishes. 
Concerning the integral with respect to the jump process 
$\{J_s\}$, we can write it in terms of a jump measure $\mu$ -defined by
\be \label{eq:defPoissRandMeasure}
\mu^X(\omega,dt,dx)=\sum_s \ind{\{\Delta X_s(\omega)\neq 0\}}\delta_{(s,\Delta X_s(\omega))}(dt,dx)
\ee
 See \citep[see][Proposition 1.16]{jacod1987limit}. 
 We obtain
\begin{align*}
\int_{(0,t]}\ea{s} dJ_s&=\int_{(0,t]\times \R}\ea{s} y \mu(\omega,ds,dy)\\
&=\int_{(0,t]\times \R}\ea{s} y (\mu(\omega,ds,dy)-\nu(ds,dy)) +\int_{(0,t]\times \R}\ea{s} y \nu(ds,dy),
\end{align*}
where $\nu$, the compensator of $\mu$, in this case is given by 
\bd
\nu(ds,dy)=\lambda\beta \ind{\{y>0\}}e^{-\beta y}dy ds.
\ed
From the application of Corollary 4.6 in \cite{kyprianou2006introductory}, it follows that
\bd
M_t=\int_{(0,t]\times \R}\ea{s} y (\mu(\omega,ds,dy)-\nu(ds,dy))
\ed
is a martingale, then $\Ex{x}{M_t}=\Ex{x}{M_0}=0$. It follow that
\begin{align*}
\Ex{x}{\int_{(0,t]}\ea{s} dJ_s}&=\int_{(0,t]\times \R}\ea{s} y \nu(ds,dy)\\
&=\int_{(0,t]}\int_{\R^+} \ea{s} y \lambda\beta e^{-\beta y}dy ds=\int_{(0,t]} \ea{s} \frac\lambda\beta  ds.
\end{align*}
Taking the expectation in \eqref{eq:itoO-U} we obtain that
\be \label{eq:itoO-U-expectation}
\Ex{x}{\ea{t}X_t}-x=-\Ex{x}{\int_{(0,t]}\left(X_{s^-} (\desc+\gamma)-\frac\lambda\beta \right)\ea{s}ds}.
\ee
Using \eqref{eq:orn-uhl-sde-sol} we compute $\Ex{x}{X_t}$:
\begin{align*}
\Ex{x}{X_t}&=\Ex{x}{e^{-\gamma t}\left(\int_0^t e^{\gamma s}dL_s+X_0\right)}\\
&= e^{-\gamma t}\left(\Ex{x}{\int_0^t e^{\gamma s}\sigma dB_s}+\Ex{x}{\int_0^t e^{\gamma s}\sigma dJ_s} + x\right)\\
&= (1-e^{-\gamma t})\frac\lambda{\beta\gamma}+ x e^{-\gamma t},
\end{align*}
concluding that $\lim_{t\to \infty} \ea{t}\Ex{x}{X_t}=0$. With similar arguments we obtain $\Ex{x}{|X_t|}\leq \frac{1}{\sqrt{\pi \gamma}} + \frac{\lambda t}{\beta}$. 
We can change the order between the expectation and the integral on the right-hand side of \eqref{eq:itoO-U-expectation}. Taking the limit as $t\to \infty$ in \eqref{eq:itoO-U-expectation} we obtain that
\bd
-x=-\int_0^\infty \Ex{x}{X_s(\desc+\gamma)-\frac\lambda\beta}\ea{s}ds.
\ed
The previous equality can be written in terms of the Green kernel by
\be \label{eq:inversionO-U}
x=\int_{\R} \left(y(\desc+\gamma)-\frac\lambda\beta\right)\Ga(x,dy)
\ee
which is (i) in Theorem \ref{teo:huntVerif} with 
\be \label{eq:fparainveOrnstein}
f(y)=y(\desc+\gamma)-\frac\lambda\beta.
\ee
Now we move on to find the Green kernel of the process.

It can be seen that for the considered process there exist a function $\Ga(x,y)$ such that $\Ga(x,dy)=\Ga(x,y)dy$. As we can not find $\Ga(x,y)$ explicitly we compute its Fourier transform,
\begin{align*}
\hGa(x,z)&=\int_{-\infty}^\infty e^{izy}\Ga(x,y)dy\\
&= \int_0^\infty \ea{t} \int_{-\infty}^\infty e^{izy} \P_x(X_t\in dy) dt\\
&= \int_0^\infty \ea{t} \Ex{x}{e^{izX_t}} dt.
\end{align*}
We need to compute $\Ex{x}{e^{izX_t}}$. In order to do that we apply Dynkin's formula to $u(x)=e^{izx}$. We have
\[u'(x)=iz u(x)\quad \text{and} \quad u''(x)=-z^2 u(x).\]
and
\begin{align*}
Lu(x)&=-\gamma x iz u(x)+\frac{-z^2}{2} u(x)+u(x) \lambda \beta \int_0^\infty \left(e^{izy}-1\right)e^{-\beta y}dy\\
&= u(x)\left(-\gamma xiz+\frac{-z^2}{2}+\frac{\lambda \beta}{\beta-i z}-\lambda \right)\\
&= u(x)\left(-\gamma xiz+\frac{-z^2}{2}+\frac{iz \lambda}{\beta-i z} \right).
\end{align*}
By Dynkin's formula we obtain that
\begin{align*}
\Ex{x}{e^{izX_t}}-e^{izx}&=\Ex{x}{\int_0^t u(X_s)\left(-\gamma X_s iz+\frac{-z^2}{2}+\frac{iz \lambda}{\beta-i z} \right) ds} 
\end{align*}
Denoting by $h(x,t,z)=\Ex{x}{e^{i z X_t}}=\Ex{x}{u(X_t)}$ we have 
\[h_z(x,t,z)=\Ex{x}{i X_t u(X_t)}\]
 and the previous equation is
\be \label{eq:hxtz}
h(x,t,z)-e^{izx}=\int_0^t -\gamma z h_z(x,s,z) + \left(-\frac{z^2}{2}+\frac{\lambda i z}{\beta-iz}\right)h(x,s,z)ds.
\ee
Instead of solving the previous equation we try to find directly 
$\hGa(x,z)$. Remember that
\begin{align*}
\hGa(x,z)&= \int_0^\infty \ea{t} h(x,t,z) dt.
\end{align*}
Taking Laplace transforms in \eqref{eq:hxtz} we obtain that
\begin{align*}
\hGa(x,z)-e^{izx}/\desc &= \int_0^\infty ds \int_s^\infty -\gamma z h_z(x,s,z) \ea{t}+ \left(-\frac{z^2}{2}+\frac{\lambda i z}{\beta-iz}\right)h(x,s,z)\ea{t}  dt\\
&= \frac{1}{\desc} \int_0^\infty -\gamma z h_z(x,s,z) \ea{s}+ \left(-\frac{z^2}{2}+\frac{\lambda i z}{\beta-iz}\right)h(x,s,z)\ea{s}  ds,
\end{align*}
which is equivalent to
\bd
\desc \hGa(x,z) - e^{izx} = -\gamma z \frac{\partial \hGa}{\partial z}(x,z)+\left(-\frac{z^2}{2}+\frac{\lambda i z}{\beta-iz}\right) \hGa(x,z)
\ed
and to 
\be
\label{eq:ecuacion}
 \left(\desc+\frac{z^2}{2}-\frac{\lambda i z}{\beta-iz}\right)  \hGa(x,z)+\gamma z \frac{\partial \hGa}{\partial z}(x,z)  = e^{izx}.
\ee
About the initial condition, observe that $\hGa$ satisfies
\begin{align*}
\hGa(x,0)&=\int_{-\infty}^\infty \Ga(x,dy)= \int_0^\infty \ea{t} \int_{-\infty}^\infty \P_x(X_t\in dy) dt\\
&= \int_0^\infty \ea{t} dt = \frac{1}{\desc}.
\end{align*}
We solve explicitly  \eqref{eq:ecuacion}: Let us start by solving the homogeneous equation
\bd
 \left(\desc+\frac{z^2}{2}-\frac{\lambda i z}{\beta-iz}\right)  H(z)+\gamma z H_z(z)  = 0,
\ed
obtaining that
\bd
\frac{H_z(z)}{H(z)}  = -\frac{\left(\desc+\frac{z^2}{2}-\frac{\lambda i z}{\beta-iz}\right)}{ \gamma z },
\ed
and
\bd
\log(H(z))=\frac{-\desc}{\gamma}\log(|z|)+\frac{-1}{4\gamma}z^2-\frac{\lambda}{\gamma}\log(\beta-iz),
\ed
then
\bd
H(z)=e^{-\frac{1}{4\gamma}z^2}|z|^{-\frac{\desc}{\gamma}}(\beta-iz)^{-\frac{\lambda}{\gamma}}.
\ed
The solution of \eqref{eq:ecuacion} is given by
\begin{align}
\notag \hGa(x,z)&=\frac{H(z)}{\gamma}\int_{0}^z  \frac{e^{i\zeta x}}{\zeta H(\zeta)}d\zeta\\ 
\label{eq:Ggorro} &=\frac{1}{\gamma}\left(e^{-\frac{1}{4\gamma}z^2}|z|^{-\frac{\desc}{\gamma}}(\beta-iz)^{-\frac{\lambda}{\gamma}}\right)\int_{0}^z  e^{i\zeta x}e^{\frac{1}{4\gamma}\zeta^2} \frac{|\zeta|^{\frac{\desc}{\gamma}}}{\zeta}(\beta-i\zeta)^\frac{\lambda}{\gamma} d\zeta
\end{align}
for $z\neq 0$.
Observe that $H(z)\to \infty$ as $z\to 0$, being equivalent to $|z|^{-\frac{\desc}{\gamma}}\beta^{-\frac{\lambda}{\gamma}}$. On the other hand
\bd
\int_{0}^z  \frac{e^{i\zeta x}}{\zeta H(\zeta)}d\zeta \to 0\quad (z\to 0)
\ed
since the integral is convergent. We can use l'H\^opital rule to compute the limit of $\hGa(x,z)$ when $z$ goes to 0. We obtain that
\begin{align*}
\lim_{z\to 0}\hGa(x,z)&=\lim_{z\to 0} \frac{H(z)}{\gamma}\int_{0}^z  \frac{e^{i\zeta x}}{\zeta H(\zeta)}d\zeta \\
&=\lim_{z\to 0} \frac{1}{\gamma}\left(|z|^{-\frac{\desc}{\gamma}}\beta^{-\frac{\lambda}{\gamma}}\right)\int_{0}^z  \frac{e^{i\zeta x}}{\zeta H(\zeta)}d\zeta \\
&=\lim_{z\to 0} \frac{1}{\gamma}\frac{\int_{0}^z  \frac{e^{i\zeta x}}{\zeta H(\zeta)}d\zeta }{|z|^{\frac{\desc}{\gamma}}\beta^{\frac{\lambda}{\gamma}}} \\
&=\lim_{z\to 0} \frac{1}{\gamma}\frac{e^{i z x}e^{\frac{1}{4\gamma} z^2} |z|^{\frac{\desc}{\gamma}-1}(\beta-i z)^\frac{\lambda}{\gamma} }{\frac{\desc}{\gamma} |z|^{\frac{\desc}{\gamma}-1}\beta^{\frac{\lambda}{\gamma}}}  =\frac{1}{\desc}
\end{align*}
concluding that the solution we found satisfies the initial condition.

We have obtained an expression for $\hGa(x,z)$, which allow us, for particular values of the parameters, to compute a discretization of $\hGa(x,z)$. From this discretization, using the discrete Fourier transform, we find a discretization of $\Ga(x,y)$ (we have written an R script to do this) necessary to solve equation (ii) in Theorem \ref{teo:huntVerif}.

\begin{example}[$\beta=\desc=\gamma=\lambda=1$]
Consider the process already presented with parameters $\beta=\desc=\gamma=\lambda=1$. Equation \eqref{eq:Ggorro} is 
\begin{align*}
\hGa(x,z)&=\left(e^{-\frac{1}{4}z^2}|z|^{-1}(\beta-iz)^{-1}\right)\int_{0}^z  e^{i\zeta x}e^{\frac{1}{4}\zeta^2}\frac{|\zeta|}{\zeta} (\beta-i\zeta) d\zeta\\
&=\left(e^{-\frac{1}{4}z^2}z^{-1}(\beta-iz)^{-1}\right) \\
&\qquad \left(i \sqrt{\pi} e^{x^2} (\beta-2x)\left(\erf\left(x-\frac{iz}{2}\right)-\erf(x)\right) 
-2 i (e^{izx+\frac{1}{4}z^2}-1)\right)
\end{align*}

Remember that we are considering the reward function $\g(x)=x^+$. 
To solve numerically equation (ii) in Theorem \ref{teo:huntVerif} we use: $\tilde{g}(x)=x$; function $f$ given in \eqref{eq:fparainveOrnstein}; and the discretization of $\Ga(x,y)$ obtained numerically as described above. The solution we found is $x^*= 1.1442$. \autoref{fig:ornstein-jumps} shows some points of the value function, obtained numerically by the formula
\bd
\Va(x)=\int_{x^*}^\infty \Ga(x,y)f(y) dy.
\ed
We also include in the plot the reward function (continuous line). Observe that for $x<x^*$ (in the continuation region) $\Va>g$ and the hypothesis of \autoref{teo:huntVerif} is fulfilled.
\begin{figure}
\begin{center}
\includegraphics[scale=.4]{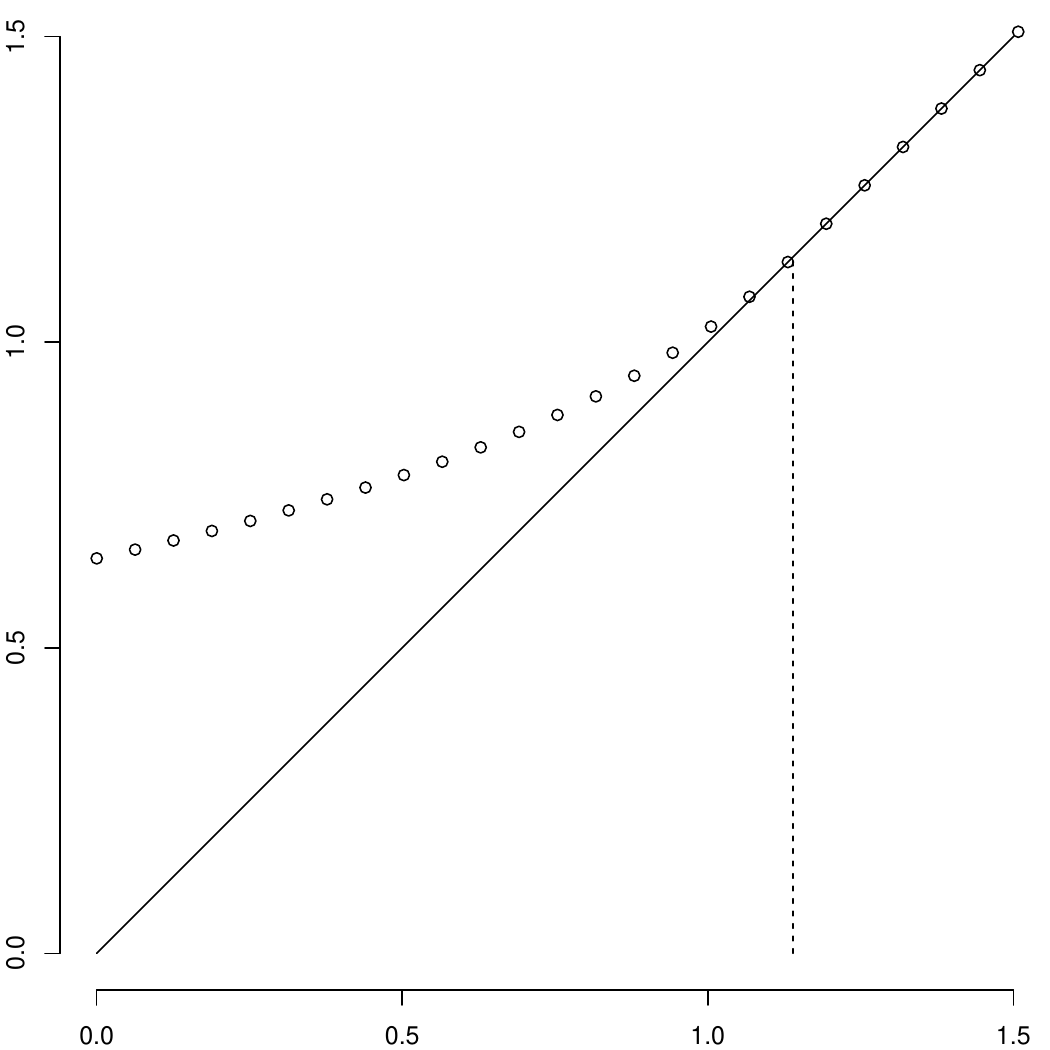}
\caption{\label{fig:ornstein-jumps} OSP for the Ornstein-Uhlenbeck with jumps: 
$g$ (continuous line), $V_1$ (circles) $x^*= 1.1442$.}
\end{center}
\end{figure}
\end{example}

\begin{remarks} (i) This example gives, up to our knowledge, the fist explicit solution to an optimal stopping problem for a process with jumps that is not a L\'evy process. (ii) We find interesting in this example the way in which the theoretical results, the Fourier methods and computational power gathers.
\end{remarks} 

\begin{example}[$\desc=\gamma=1$ and $\lambda=0$] 
In this example we consider the process $X$ already presented with parameter $\lambda=0$, i.e. with no jumps and the same reward function $\g(x)=x^+$. This problem was solved by \cite{taylor}.

We have 
\begin{align*}
\hGa(x,z)&=\left(e^{-\frac{1}{4}z^2}|z|^{-1}\right)\int_{0}^z  \frac{e^{i\zeta x}|\zeta|e^{\frac{1}{4}\zeta^2}}{\zeta} d\zeta\\
&=i \sqrt{\pi} e^{-\frac{1}{4}z^2}z^{-1} e^{x^2} \left(\erf\left(x-\frac{iz}{2}\right)-\erf(x)\right)
\end{align*}
As in the previous example, we solve numerically equation (ii) in Theorem \ref{teo:huntVerif} obtaining that $x^*\simeq 0.5939$.  \autoref{fig:ornstein} shows some points of the value function obtained numerically by the formula:
\bd
\Va(x)=\int_{x^*}^\infty \Ga(x,y)f(y) dy;
\ed
we also include in the plot the reward function (continuous line) to show that in the stopping region they coincide and also to verify that $\Va$ is a majorant of $g$ (hypothesis of \autoref{teo:huntVerif}). The obtained threshold is in accordance with the result obtained in \cite{taylor}.
\begin{figure}
\begin{center}
\includegraphics[scale=.25]{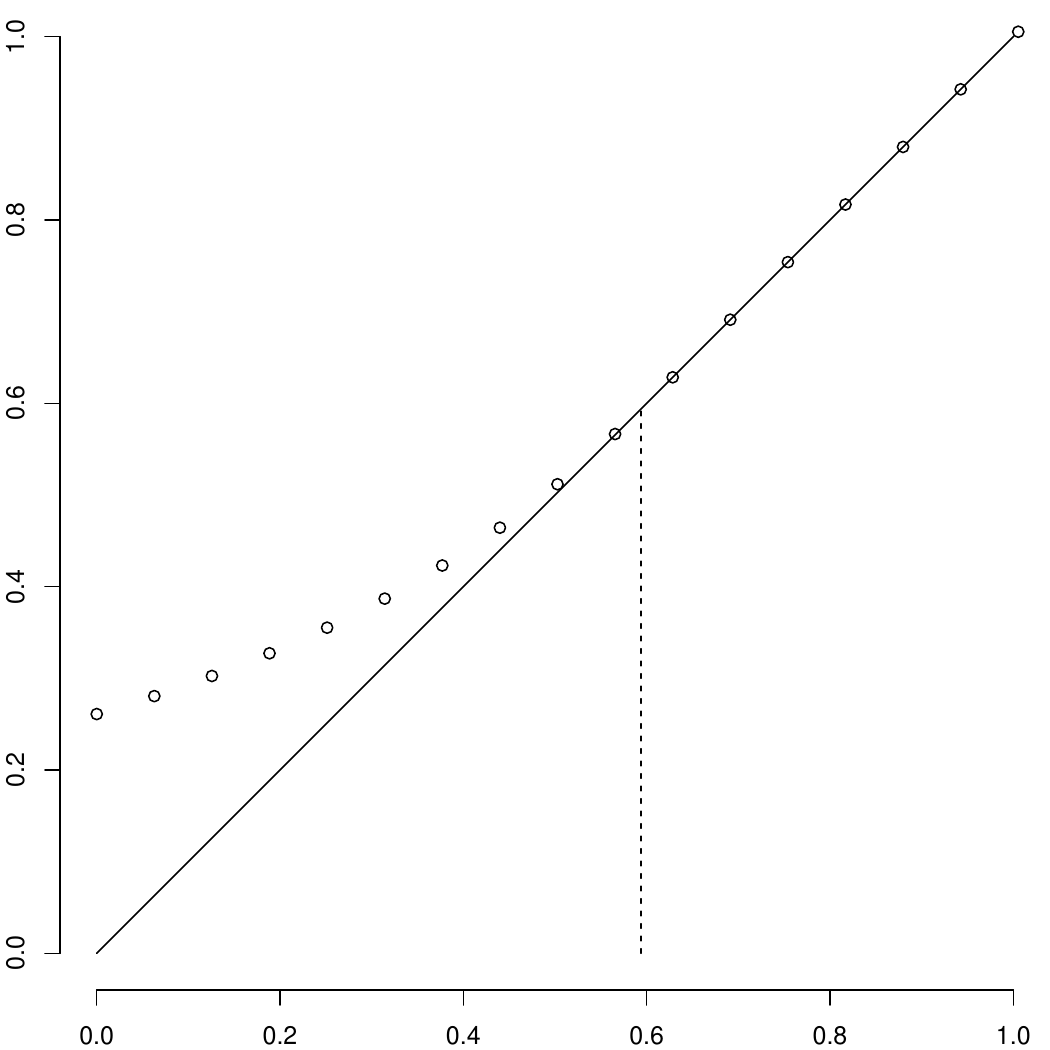}
\caption{\label{fig:ornstein} OSP for the Ornstein-Ulhenbeck process: $g$ (continuous line), $V_1$ (circles).}
\end{center}
\end{figure}
\end{example}


\begin{thebibliography}{31}
\providecommand{\natexlab}[1]{#1}
\providecommand{\url}[1]{\texttt{#1}}
\expandafter\ifx\csname urlstyle\endcsname\relax
  \providecommand{\doi}[1]{doi: #1}\else
  \providecommand{\doi}{doi: \begingroup \urlstyle{rm}\Url}\fi

\bibitem[Avram et~al.(2004)Avram, Kyprianou, and Pistorius]{avram2004exit}
F.~Avram, A.~E. Kyprianou, and M.~R. Pistorius.
\newblock Exit problems for spectrally negative {L}{\'e}vy processes and
  applications to ({C}anadized) {R}ussian options.
\newblock \emph{The Annals of Applied Probability}, 14\penalty0 (1):\penalty0
  215--238, 2004.

\bibitem[Benth et~al.(2008)Benth, \v{S}. Benth, and Koekebakker]{electricity}
F.~E. Benth, J.~\v{S}. Benth, and S.~Koekebakker.
\newblock \emph{Stochastic modelling of electricity and related markets},
  volume~11.
\newblock World Scientific Publishing Company Incorporated, 2008.

\bibitem[Blumenthal and Getoor(1968)]{blumenthal-ge}
R.~M. Blumenthal and R.~K. Getoor.
\newblock \emph{Markov processes and potential theory}.
\newblock Pure and Applied Mathematics, Vol. 29. Academic Press, New York,
  1968.

\bibitem[Chan(2005)]{chan}
T.~Chan.
\newblock Pricing perpetual {A}merican options driven by spectrally one-sided
  {L}{\'e}vy processes.
\newblock In \emph{Exotic option pricing and advanced {L}{\'e}vy models}, pages
  195--216. Wiley, Chichester, 2005.

\bibitem[Christensen et~al.(2013)Christensen, Salminen, and
  Ta]{ChristensenSalminenBao}
S.~Christensen, P.~Salminen, and B.~Q. Ta.
\newblock Optimal stopping of strong {M}arkov processes.
\newblock \emph{Stochastic Processes and their Applications}, 123\penalty0
  (3):\penalty0 1138--1159, 2013.

\bibitem[Crocce(2014)]{tesis}
F.~Crocce.
\newblock \emph{Optimal Stopping for Strong Markov Processes: Explicit
  solutions and verification theorems for diffusions, multidimensional
  diffusions, and jump-processes}.
\newblock PhD thesis, PEDECIBA - Universidad de la Rep\'ublica, 2014.

\bibitem[Crocce and Mordecki(2014)]{cm}
F.~Crocce and E.~Mordecki.
\newblock Explicit solutions in one-sided optimal stopping problems for
  one-dimensional diffusions.
\newblock \emph{Stochastics}, 86\penalty0 (3):\penalty0 491--509, 2014.

\bibitem[Crocce and Mordecki(2022)]{cm2}
F.~Crocce and E.~Mordecki.
\newblock An algorithm to solve optimal stopping problems for one-dimensional
  diffusions.
\newblock \emph{ALEA Lat. Am. J. Probab. Math. Stat.}, 19\penalty0
  (2):\penalty0 1353--1375, 2022.

\bibitem[Dynkin(1963)]{dynkin:1963}
E.~B. Dynkin.
\newblock Optimal choice of the stopping moment of a {M}arkov process.
\newblock \emph{Dokl. Akad. Nauk SSSR}, 150:\penalty0 238--240, 1963.

\bibitem[Dynkin(1965)]{dynkin:books}
E.~B. Dynkin.
\newblock \emph{Markov Processes I, II}.
\newblock Springer-Verlag, Berlin, Heidelberg, and New York, 1965.

\bibitem[Dynkin(1969)]{dynkin:1969}
E.~B. Dynkin.
\newblock The exit space of a {M}arkov process.
\newblock \emph{Uspehi Mat. Nauk}, 24\penalty0 (4 (148)):\penalty0 89--152,
  1969.

\bibitem[Dynkin and Yushkevich(1969)]{dynkin1969theorems}
E.~B. Dynkin and A.~A. Yushkevich.
\newblock \emph{Theorems and problems on {M}arkov processes}.
\newblock 1969.

\bibitem[Jacod and Shiryaev(1987)]{jacod1987limit}
J.~Jacod and A.~N. Shiryaev.
\newblock \emph{Limit theorems for stochastic processes}, volume 288.
\newblock Springer-Verlag Berlin, 1987.

\bibitem[Karlin and Taylor(1981)]{karlin-tay}
S.~Karlin and H.~M. Taylor.
\newblock \emph{A second course in stochastic processes}.
\newblock Academic Press Inc. [Harcourt Brace Jovanovich Publishers], New York,
  1981.

\bibitem[Kunita and Watanabe(1963)]{kw}
H.~Kunita and T.~Watanabe.
\newblock Markov processes and {M}artin boundaries.
\newblock \emph{Bull. Amer. Math. Soc.}, 69:\penalty0 386--391, 1963.

\bibitem[Kunita and Watanabe(1965)]{kw1}
H.~Kunita and T.~Watanabe.
\newblock Markov processes and {M}artin boundaries. {I}.
\newblock \emph{Illinois J. Math.}, 9:\penalty0 485--526, 1965.

\bibitem[Kyprianou(2006)]{kyprianou2006introductory}
A.~E. Kyprianou.
\newblock \emph{Introductory lectures on fluctuations of L{\'e}vy processes
  with applications}.
\newblock Springer, 2006.

\bibitem[{McKean Jr.}(1965)]{mckean}
H.~P. {McKean Jr.}
\newblock Appendix: A free boundary problem for the heat equation arising from
  a problem in mathematical economics.
\newblock \emph{Industrial Management Review}, 6\penalty0 (2):\penalty0 32--39,
  1965.

\bibitem[Mordecki(2002)]{mordeckiOSPOLP}
E.~Mordecki.
\newblock Optimal stopping and perpetual options for {L}{\'e}vy processes.
\newblock \emph{Finance Stoch.}, 6\penalty0 (4):\penalty0 473--493, 2002.

\bibitem[Mordecki and Salminen(2007)]{mordecki-salminen}
E.~Mordecki and P.~Salminen.
\newblock Optimal stopping of {H}unt and {L}{\'e}vy processes.
\newblock \emph{Stochastics}, 79\penalty0 (3-4):\penalty0 233--251, 2007.

\bibitem[Novikov(2006)]{novikov2006levyornstein}
A.~A. Novikov.
\newblock L{\'e}vy-driven {O}rnstein-{U}hlenbeck processes: survey of results
  on first passage times.
\newblock In \emph{Lecture notes presented at the conference {\grqq}Stochastic
  Calculus with Jumps”. University of Angers}, 2006.

\bibitem[Novikov and Shiryaev(2007)]{novikov2007solution}
A.~A. Novikov and A.~N. Shiryaev.
\newblock On a solution of the optimal stopping problem for processes with
  independent increments.
\newblock \emph{Stochastics An International Journal of Probability and
  Stochastic Processes}, 79\penalty0 (3-4):\penalty0 393--406, 2007.

\bibitem[Peskir and Shiryaev(2006)]{ps}
G.~Peskir and A.~N. Shiryaev.
\newblock \emph{Optimal stopping and free-boundary problems}.
\newblock Lectures in Mathematics ETH Z{\"u}rich. Birkh{\"a}user Verlag, Basel,
  2006.

\bibitem[Revuz and Yor(1999)]{revuz}
D.~Revuz and M.~Yor.
\newblock \emph{Continuous martingales and {B}rownian motion}, volume 293 of
  \emph{Grundlehren der Mathematischen Wissenschaften [Fundamental Principles
  of Mathematical Sciences]}.
\newblock Springer-Verlag, Berlin, third edition, 1999.

\bibitem[Rogers and Williams(2000)]{rogers2000diffusions}
L.~C.~G. Rogers and D.~Williams.
\newblock \emph{Diffusions, Markov processes and martingales: Volume 2, It{\^o}
  calculus}, volume~2.
\newblock Cambridge university press, 2000.

\bibitem[Salminen(1985)]{salminen85}
P.~Salminen.
\newblock Optimal stopping of one-dimensional diffusions.
\newblock \emph{Math. Nachr.}, 124:\penalty0 85--101, 1985.

\bibitem[Shepp and Shiryaev(1993)]{shepp93}
L.~A. Shepp and A.~N. Shiryaev.
\newblock The {R}ussian option: reduced regret.
\newblock \emph{Ann. Appl. Probab.}, 3\penalty0 (3):\penalty0 631--640, 1993.

\bibitem[Shiryaev(1961)]{sh1961}
A.~N. Shiryaev.
\newblock The problem of the most rapid detection of a disturbance in a
  stationary process.
\newblock \emph{Sov. Math., Dokl.}, 2:\penalty0 795--799, 1961.
\newblock ISSN 0197-6788.

\bibitem[Shiryaev(1969)]{Sh1}
A.~N. Shiryaev.
\newblock 
\emph{Statisticheskii posledovatel'nyi analiz. Optimal'nye pravila ostanovki.}
\newblock Optim. Issled. Oper. Nauka, Fizmatlit, Moskva, 1969.

\bibitem[Shiryaev(2008)]{shirBook1978}
Albert~N. Shiryaev.
\newblock \emph{Optimal stopping rules}, volume~8 of \emph{Appl. Math. (N.
  Y.)}.
\newblock Berlin: Springer, reprint of the 1978 3rd ed. edition, 2008.
\newblock ISBN 978-3-540-74010-0; 978-3-540-74011-7.
\newblock \doi{10.1007/978-3-540-74011-7}.

\bibitem[Taylor(1968)]{taylor}
H.~M. Taylor.
\newblock Optimal stopping in a {M}arkov process.
\newblock \emph{Ann. Math. Statist.}, 39:\penalty0 1333--1344, 1968.

\end{thebibliography}

\end{document}